\documentclass{amsart}

\usepackage{amsmath}
\usepackage{amsfonts}
\usepackage{amssymb}

\numberwithin{equation}{section}
\newtheorem{Theorem}{Theorem}[section]
\newtheorem{Definition}[Theorem]{Definition}				

\newtheorem{R}[Theorem]{Remark}

\newtheorem{Lemma}[Theorem]{Lemma}
\newtheorem{Assumption}[Theorem]{Assumption}

{\unskip\nobreak\hskip 1em plus 1fil\nobreak$\Box$
\parfillskip=0pt%
\endtrivlist}

\newcommand{\Om}{\Omega}

\newcommand{\E}{H_0^{1}(\Omega)}
\newcommand{\Linf}{L^{\infty}(\Omega)}
\newcommand{\K}{\E \cap \Linf}
\newcommand{\W}{H^1(\Omega)}
\newcommand{\X}{H^2(\Omega)}

\newcommand{\Edual}{H^{-1}(\Omega)}
\newcommand{\h}{\hspace*{.24in}}
\newcommand{\ds}{\displaystyle}
\newcommand{\io}{\int_\Omega}

\newcommand{\Div}{\mbox{div}}

\begin{document}
	\title{Nonlinear Elliptic Dirichlet and  No-Flux Boundary Value Problems}
\author[L. Nguyen]{Loc Hoang Nguyen}
\address{Department of Mathematics and Applications, \'Ecole Normale Sup\'erieure,
45 Rue d'Ulm, 75005 Paris, France.} \email{lnguyen@dma.ens.fr}
\author[K. Schmitt]{Klaus Schmitt}
	\address{Department of Mathematics, University of Utah, 155 South 1400 East,  Salt Lake City, UT, 84112, USA}
	\email{schmitt@math.utah.edu}
%	\dedicated{To Jean Mawhin--Happy Birthday}
\begin{abstract}
This paper is devoted to establishing results for semilinear elliptic boundary value problems where the solvability of problems subject to {\it No Flux} boundary conditions follows from the solvability of related {\it Dirichlet }boundary value problems. Throughout it is assumed that the nonlinear perturbation terms are gradient dependent. An extension of \textit{No-Flux} problems is discussed, as well.  
%\end{abstract}
\begin{center}{{\bf To Jean--Happy 70th Birthday }}
\end{center}
\end{abstract}
\keywords{
		$\W$ a priori bounds, compactness, Bernstein-Nagumo growth condition, sub-supersolution theorems, Dirichlet and no-flux boundary conditions}
	%\end{keyword}
	\subjclass{ 35B15, 34B15, 35B45, 35J60, 35J65}
	%\end{class}
\maketitle

\section{Introduction} \label{Sec:Intro}

Let 
\[
f:[0,1]\times \mathbb R\times \mathbb R\to \mathbb R
\]
be a continuous function, such that for every $M >0$ there exist constants $a$ and $b$ (depending on $M$) 
so that
\[
|f(t,u,u')|\leq a +b|u'|^2,~t\in [0,1], ~|u|\leq M,
\]
($f$ satisfies a {\it Bernstein - Nagumo} condition), 
then the periodic boundary value problem
\begin{equation}
\label{ode}
-u''=f(t,u,u'),~u(0)=u(1),~u'(0)=u'(1),
\end{equation}
has a solution $u$
such that
\[ \alpha (t)\leq u(t) \leq \beta (t), ~0\leq t\leq 1,
\]
whenever $\alpha $ and $\beta $ are sub - and supersolutions (upper and lower solutions), with
\[ \alpha (t) \leq \beta (t), ~0\leq t\leq 1,
\]
i.e.
\begin{equation}
\label{odel}
-\alpha ''\leq f(t,\alpha ,\alpha '),~\alpha (0)=\alpha (1),~\alpha '(0)\geq \alpha '(1),
\end{equation}
\begin{equation}
\label{odeu}
-\beta ''\geq f(t,\beta ,\beta '),~\beta(0)=\beta (1),~\beta '(0)\leq \beta '(1).
\end{equation}
This is an old result and essentially goes back to Knobloch \cite{knobloch:enm63}; several alternate proofs (covering more general cases than those in \cite{knobloch:enm63}) were given later, e.g., \cite{mawhin:nfa74}, \cite{schmitt:psn67}, cf., also \cite{DeCosterHabets:tbv2006}. Higher dimensional analogues of the periodic boundary value problem (the {\it no flux} problem were introduced later (\cite{berestycki:fbp80}) and several examples (with $f$ independent of gradient terms) were studied in \cite{amster:esn04}, \cite{le:mpn95}, \cite{le:mpn96}, \cite{LevySchmitt:ejde2004}, \cite{LevySchmitt:sgc2006}, \cite{schmitt:pss04}; see also \cite{carl:nvp07}.

Our main purpose in this paper is to establish a new version of sub-supersolution theorems when (\ref{ode}) is replaced by the following \textit{no-flux} problem
\begin{equation}
	\left\{
		\begin{array}{rcll}
			-\Div [a(x, u) \nabla u] &=& f(x, u, \nabla u) &\mbox{in } \Omega,\\
			u &=& \mbox{constant} &\mbox{on } \partial \Omega,\\
			\ds \int_{\partial \Omega} a(x, u)\partial_{\nu}u d\sigma &=& 0,
		\end{array}
	\right.
\label{pde noflux 1}
\end{equation}  where $\Omega$ is a smooth bounded domain in $\mathbb{R}^N$, $N \geq 1$. 
It is to be noted that the constant value of the boundary data is not specified and  corresponds to the one-dimensional case
$u(0)=u(1),$ whereas the requirement in one dimension that $u'(0)=u'(1),$ corresponds to the boundary integral term, in the case that $a \equiv 1$. The approach to prove our sub-supersolution theorem for (\ref{pde noflux 1}) is to solve a family of  Dirichlet problems for the same equation and then establish that at least one of these solutions  satisfies the boundary condition above. We therefore shall introduce first a sub-supersolution theorem for a Dirichlet problem; this will be done in Section \ref{sec: Dirichlet}. 

The property that 
\[
	\int_{\partial \Omega} a(x, u)\partial_{\nu}u d\sigma \geq \int_{\partial \Omega} a(x, v)\partial_{\nu}v d\sigma
\] for all $u, v \in H^2(\Omega )$ with $u \leq v$  and $u\equiv v$ on $\partial \Omega$ will play an important role in the existence proof. This motivates us to introduce a generalization of (\ref{pde noflux 1}) by replacing the boundary expression above by a map that shares this property and we shall state a  sub-supersolution result for this generalized problem, as well.

%We shall also establish a similar result for the Robin boundary value problem
%\begin{equation}
%\label{robin}
%\begin{array}{rcl}
%	-\Delta u &=&f(x, u, \nabla u), ~x\in \Omega \\
%\frac{\pa u}{\pa \nu}+\mu (x) u(x)&=&0,~x\in \pa \Om .
%\end{array}
%\end{equation}
%
%
%Let $\Omega$ be a smooth domain in $\mathbb{R}^N,$ $N \geq 1$ and $a: \Omega \times \mathbb{R} \rightarrow [1, \infty)$ be a $C^1$ smooth function.  Consider the differential equation
%\begin{equation}
%	-\Div [a(x, u)\nabla u] = f(x, u, \nabla u),
%	\label{Main Eqn}
%\end{equation} where $f$ and its growth conditions will be specified later.

We mention that the main points, which make the equation under consideration interesting, are the gradient dependence of the nonlinear term  $f$ and the presence of weight $a(x, u)$. We cite the papers of Callegari and Nachman \cite{CallegariNachman:jmaa1978, CallegariNachman:siamjam1980} and Fulks and Maybe \cite{FulksMaybe:om1960},  including some of their
references, for providing physical situations from which problems involving the gradient dependence arise,  and the paper \cite{LevySchmitt:jde1998}, where degenerate (near the boundary) nonlinear elliptic problems have been studied.

\section{General settings}

We shall assume, as in  Section \ref{Sec:Intro},  that $a$ is a smooth function with  
\begin{equation}
	a(x, s) \geq 1,
	\label{coercive}
\end{equation} for all $x \in \Omega$ and $s \in \mathbb{R}, $ and  that
\begin{equation}
	a(x, s) \leq a_1(x)|s|  + b_1(x), 
	\label{a: growth condition}
\end{equation} 
for some $a_1 \in \Linf$ and $b_1 \in L^2(\Omega)$. Under these two conditions, the map 
\begin{eqnarray*}
	A : \Omega \times \mathbb{R} \times \mathbb{R}^N &\rightarrow& \mathbb{R}^N \\
	(x, s, p) &\mapsto& a(x, s)p
\end{eqnarray*}
satisfies the Leray-Lions conditions (see \cite{LadyzhenskayaUraltseva:sv1985}).

%Let $\mathcal{A}: \E \rightarrow \Edual$ be defined by
%\begin{equation}
%	\langle \mathcal{A}u, v \rangle := \io a(x, u)\nabla u\nabla v dx
%	\label{cal a}
%\end{equation} for all $u, v \in \E.$ (Note that one can use H\"older's inequality and (\ref{a: growth condition}) to verify the well-definedness of $\mathcal{A}.$)

We recall here the concept of the class $(S_+)$, which was introduced in \cite{Browder:bams1983} (see also \cite{DincaJebeleanMawhin:pm2001}).
\begin{Definition}
	We say that $\mathcal{L}: \E \rightarrow \Edual$ belongs to the class $(S_+)$ provided that for all sequences $\{u_n\}$ converging weakly to $u$ in $\E,$ 
 then $u_n$ converges strongly to $u$ in $\E$, whenever	
\begin{equation}
	\limsup_{n \rightarrow \infty}\langle \mathcal{L} u_n, u_n - u \rangle \leq 0.
	\label{2.4}
\end{equation} 

\end{Definition}

%Because of our purpose to establish sub-supersolution theorems, we would like to modify this definition as follows.
%\begin{Definition}
%	We say that $\mathcal{L}: \E \rightarrow \Edual$ belongs to the class $(\mathcal{S}_+)$ provided that for all sequences $\{u_n\} \subset \K$ converging weakly to $u$ in $\E,$ if $\{u_n\}$ is uniformly bounded in $\Linf$ and
%\begin{equation}
%	\limsup_{n \rightarrow \infty}\langle \mathcal{L} u_n, u_n - u \rangle \leq 0, 
%	\label{2.5}
%\end{equation} 
%then $u_n$ converges strongly to $u$ in $\E$. 
%\label{Definition cal S}
%\end{Definition}

The following lemma holds.
\begin{Lemma} Let $T: \E \rightarrow \E$ be continuous. Assume that $T(\E)$ is bounded in $\Linf.$ 
Then the map $\mathcal{A}_T$ defined by
\begin{equation}
	\langle \mathcal{A}_Tu, v \rangle := \io a(x, Tu)\nabla u\nabla v dx,
	\label{cal a T}
\end{equation} for all $u, v \in \E,$ 
is continuous and belongs to the class $(S_+).$ 
\label{Lemma: S+}
\end{Lemma}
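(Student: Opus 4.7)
The plan is to treat continuity and the $(S_+)$ property separately, in both cases reducing the key step to a dominated-convergence argument for the coefficient $a(x,Tu_n)$.

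For continuity, I would suppose $u_n \to u$ in $\E$ and use continuity of $T$ to get $Tu_n \to Tu$ in $\E$; then a subsequence satisfies $Tu_n \to Tu$ almost everywhere in $\Om$. Since $T(\E)$ is bounded in $\Linf$ one has $|Tu_n|,|Tu| \le M$ a.e., so smoothness of $a$ gives $a(x,Tu_n) \to a(x,Tu)$ a.e., dominated by the $\Linf$ function $\phi(x):=\sup_{|s|\leq M}a(x,s)$. Writing
\[
\langle \mathcal{A}_T u_n - \mathcal{A}_T u, v\rangle = \io a(x,Tu_n)\,\na(u_n-u)\cdot\na v\,dx + \io (a(x,Tu_n)-a(x,Tu))\,\na u\cdot\na v\,dx,
\]
the first integral is bounded by $\|\phi\|_\infty\|u_n-u\|_\E\|v\|_\E$, and the second by $\|(a(x,Tu_n)-a(x,Tu))\na u\|_{L^2}\|v\|_\E$, which tends to zero by dominated convergence. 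Taking the supremum over $\|v\|_\E \leq 1$ and arguing along subsequences to pin down the full limit yields $\mathcal{A}_T u_n \to \mathcal{A}_T u$ in $\Edual$.

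For the $(S_+)$ property, I would take $u_n \rightharpoonup u$ in $\E$ with $\limsup_n\langle \mathcal{A}_T u_n, u_n - u\rangle \leq 0$ and split
\[
\langle \mathcal{A}_T u_n, u_n - u\rangle = \io a(x,Tu_n)|\na(u_n-u)|^2\,dx + \io a(x,Tu_n)\,\na u\cdot\na(u_n-u)\,dx.
\]
The coercivity $a\ge 1$ from (\ref{coercive}) makes the first integral at least $\|u_n-u\|_\E^2$, so the entire task reduces to driving the second integral to zero. I would do this by establishing strong $L^2$-convergence $a(x,Tu_n)\na u\to a(x,Tu)\na u$ and pairing it with $\na(u_n-u)\rightharpoonup 0$ in $L^2(\Om)^N$; then $\limsup_n \|u_n-u\|_\E^2 \leq 0$ forces $u_n \to u$ strongly in $\E$.

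The main obstacle is extracting an almost-everywhere convergent subsequence of $\{Tu_n\}$ from merely weak convergence of $u_n$ in $\E$, since continuity of $T\colon \E\to\E$ does not a priori preserve weak convergence. The natural route is to combine the Rellich-Kondrachov compact embedding $\E\hookrightarrow L^2(\Om)$ (which delivers $u_n \to u$ a.e.\ along a subsequence) with the $\Linf$ bound on $T(\E)$ and weak-$*$ compactness in $\Linf$, identifying the limit via continuity of $T$ to conclude $Tu_n\to Tu$ a.e.\ on a further subsequence. Once this is in hand, the $\Linf$ bound supplies the uniform integrability needed to invoke Vitali's (or the dominated) convergence theorem, giving the required strong $L^2$ convergence of $a(x,Tu_n)\na u$ and closing the $(S_+)$ argument.
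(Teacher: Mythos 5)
Your decomposition for the $(S_+)$ part is essentially the paper's: the authors likewise use $a\ge 1$ to bound $\|u_n-u\|_{\E}^2$ by $\io a(x,Tu_n)|\na (u_n-u)|^2dx$, peel off $\langle\mathcal{A}_Tu_n,u_n-u\rangle$, dispose of $\io a(x,Tu)\na u\cdot\na (u_n-u)dx$ by weak convergence, and are left with exactly your cross term $\io (a(x,Tu_n)-a(x,Tu))\na u\cdot\na (u_n-u)dx$. (For continuity the paper simply appeals to the Leray--Lions structure; your dominated-convergence argument is a reasonable expansion of that.)

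The genuine problem is the step you yourself flag at the end, and your proposed fix does not close it. From $u_n\rightharpoonup u$ in $\E$ you do get $u_n\to u$ a.e.\ along a subsequence via Rellich--Kondrachov, but that says nothing about $Tu_n$: the operator $T$ is only assumed norm-continuous on $\E$, so it need not send weakly convergent sequences anywhere near $Tu$. Weak-$*$ compactness of $\{Tu_n\}$ in $\Linf$ produces a weak-$*$ limit, but (i) weak-$*$ convergence does not imply a.e.\ convergence, and (ii) there is no mechanism to identify that limit with $Tu$ --- ``continuity of $T$'' acts on norm-convergent sequences in $\E$, which is not what you have. So the a.e.\ convergence $a(x,Tu_n)\to a(x,Tu)$, on which both your continuity argument and your treatment of the cross term rest, is not established under the lemma's stated hypotheses alone. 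To be fair, the paper's own proof has the same soft spot: it asserts the cross term tends to zero ``because of the boundedness of $T(\E)$ in $\Linf$,'' which only bounds the integrand rather than making it vanish. The gap is harmless in the intended application, where $T$ is the pointwise truncation $Tu=\max\{\min\{u,\overline u\},\underline u\}$: there $u_n\to u$ a.e.\ immediately gives $Tu_n\to Tu$ a.e., and dominated convergence yields $\|(a(x,Tu_n)-a(x,Tu))\na u\|_{L^2(\Omega)}\to 0$, exactly as you want. For a general $T$ one needs an additional hypothesis, e.g.\ that $T$ maps weakly convergent sequences in $\E$ to a.e.\ convergent ones, or that $T$ is a Nemytskii-type pointwise operation.
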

\begin{proof}
	The continuity of $\mathcal{A}_T$ is obvious because $A$ satisfies the Leray-Lions conditions. Hence, we only provide the proof of the second assertion.
		
	Let $\{u_n\} \subset \K$ converge weakly to $u$ in $\E$ and be uniformly bounded in $\Linf$. We have
\begin{eqnarray}
	\|u_n - u\|^2_{\E} &\leq& \io a(x, Tu_n)\nabla (u_n - u)\nabla (u_n - u)dx \nonumber\\
	&=& \io a(x, Tu_n)\nabla u_n \nabla (u_n - u)dx - \io a(x, Tu)\nabla u \nabla (u_n - u)dx \nonumber\\
	&+& \io (a(x, Tu) - a(x, Tu_n)) \nabla u \nabla (u_n - u)dx. \label{2.6}
\end{eqnarray}
	Using H\"older's inequality, we  see that the third integral in the right hand side of (\ref{2.6}) converges to $0$ as $n \rightarrow \infty$. In fact,
\begin{eqnarray*}
	&&\left|\io (a(x, Tu) - a(x, Tu_n)) \nabla u \nabla (u_n - u)dx\right|\\
	&& \leq \left(\io (a(x, Tu) - a(x, Tu_n))^2 |\nabla u|^2dx\right)^{\frac{1}{2}}\left(\io (|\nabla (u_n - u)|^2dx\right)^{\frac{1}{2}},
\end{eqnarray*}
which tends to $0$ because of the boundedness of $\{|\nabla (u_n - u)|\}$  in $L^2(\Omega)$ and that of the set $T(\E)$ in $\Linf.$ It follows from the weak convergence of $u_n$ to $u$ in $\E$, that the second integral of the right hand side in (\ref{2.6}) tends to $0$. Now, taking $\limsup$ of both sides of the inequality (\ref{2.6}) and recalling (\ref{2.4}),  give us the strong convergence of $u_n$ to $u$ in $\E$.
	\end{proof}

Throughout this paper, two continuous  functions $\underline u$ and $\overline u,$ defined on $\overline \Omega,$ are said to be well-ordered if $\underline u(x) \leq \overline u(x),$ for all $x \in \overline \Omega.$

Let $f: \Omega \times \mathbb{R} \times \mathbb{R}^N \rightarrow \mathbb{R}$ be a Carath\'eodory function. In this paper, we assume that $f$ satisfies a Bernstein-Nagumo condition on $[\underline u, \overline u]$ for some well-ordered pair of functions $\underline u$ and $\overline u$ in $C(\overline \Omega),$ i.e., there exist $a_2 \in L^2(\Omega)$ and $b_2 \in [0, \infty)$, both of which are allowed to depend on $\underline u, \overline u$, such that
\begin{equation}
	|f(x, s, p)| \leq a_2(x) + b_2|p|^2 \h \mbox{for all } x \in \Omega, s \in [\underline u(x), \overline u(x)], p \in \mathbb{R}^N.
	\label{Bernstein-Nagumo condition}
\end{equation}

With $a$ and $f$ in hand, we establish a sub-supersolution theorem for the equation 
\[
-\Div [a(x, u) \nabla u] = f(x, u, \nabla u) ~\mbox{in } \Omega,
\]
 subject to Dirichlet boundary  conditions and then apply it to obtain a sub-super-\\solution theorem for the problem containing  the same differential equation and the Dirichlet boundary condition replaced by a  no-flux one; i.e.,
\begin{equation}
	\int_{\partial \Omega}a(\xi, u)\partial_{\nu}u d\sigma = 0,
\end{equation}
where $d\sigma$ is the surface measure defined on $\partial \Omega$ and $\nu$ denotes the outward normal unit vector field to $\partial \Omega$.   

\section{The Bernstein-Nagumo condition and its consequences}

Motivated by \cite{LocSchmitt:ans2011,LocSchmitt:na2011} and their  references, we wish to establish $\E$ \textit{a priori} bounds and the boundedness in $\E$ for the family of functions $\{u \}\subset \K$ satisfying
\begin{equation}
	\left|\io a(x, u)\nabla u\nabla vdx\right| \leq \io (a_2 + b_2|\nabla u|^2)|v|dx, 
	\label{General inequality weak sense} 
\end{equation} for all $v \in \K$. Although the results look similar to those in \cite{LocSchmitt:na2011}, they may not be directly deduced from the results of that paper because of the presence of the weight function. However, the proof in \cite{LocSchmitt:na2011} may be used  for the case under consideration and we   present it in this section to emphasize the beauty of the test functions used  (see \cite{Troianiello:pp1987}) and for  completeness' sake. 

Assume that there are two well-ordered continuous functions $\underline u \leq \overline u.$ %and that $f$ satisfies a Bernstein-Nagumo condition on $[\underline u, \overline u].$ 
Let $u$ satisfy (\ref{General inequality weak sense}) with $u \in [\underline u, \overline u]$. Fix $t > 0$. Using the test function $v_t = e^{tu^2}u \in \K$ gives
\begin{eqnarray*}
	\io  e^{tu^2}(2tu^2 + 1)|\nabla u|^2dx &\leq&
	\io e^{tu^2}(2tu^2 + 1)a(x, u)|\nabla u|^2dx\\
	% &\leq& \io |f(x, u, \nabla u)|e^{tu^2}|u|dx\\
	&\leq& \io (a_2 + b_2|\nabla u|^2)e^{tu^2}|u|dx.
\end{eqnarray*}
It follows that
\[
	\io e^{tu^2}(2tu^2 + 1 - b_2|u|)|\nabla u|^2dx \leq Me^{tM^2}\|a_2\|_{L^1(\Omega)} = C(M),
\] 
where \[M = \max\{\|\underline u\|_{\Linf}, \|\overline u\|_{\Linf}\}.\] We have written $C(M)$, instead of $C(M, \|a_2\|_{L^1(\Omega)})$, because $a_2$ may itself  depend on $M.$ Noting that $e^{tu^2} \geq 1$ and choosing $t$ large, we have the following theorem.
\begin{Theorem}
%The set $\{u\} \subset \K$ of solutions to (\ref{General inequality weak sense}) is compact in $\E$.
Let $\underline u$ and $\overline u$ be a well-ordered pair of continuous functions.
Then there exists $C > 0,$ depending on $\underline u$ and $\overline u,$ such that for all $u \in \K$ which solve (\ref{General inequality weak sense}) with $u \in [\underline u, \overline u],$
\begin{equation}
	\|u\|_{\E} \leq C.
\label{a priori bound}	
\end{equation}
\label{Theorem a priori bound}	  
\end{Theorem}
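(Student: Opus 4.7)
The plan is to finish the calculation displayed immediately before the statement. That computation has already done the heavy lifting: testing the inequality (\ref{General inequality weak sense}) against $v_t = e^{tu^2}u$, using the coercivity bound $a(x, u) \geq 1$ on the left hand side and the Bernstein--Nagumo bound (\ref{Bernstein-Nagumo condition}) on the right hand side, leads to
\[
  \io e^{tu^2}(2tu^2 + 1 - b_2|u|)|\nabla u|^2 \, dx \leq M e^{tM^2}\|a_2\|_{L^1(\Omega)} =: C(M),
\]
with $M = \max\{\|\underline u\|_{\Linf}, \|\overline u\|_{\Linf}\}$. The test function $v_t$ indeed lies in $\K$: the map $s \mapsto e^{ts^2}s$ is Lipschitz on $[-M, M]$ and vanishes at $0$, so it preserves membership in $H^1_0(\Omega) \cap \Linf$, and its gradient $e^{tu^2}(2tu^2 + 1)\nabla u$ belongs to $L^2(\Omega)$.

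The key remaining step is to pick $t$ so large that the coefficient $2tu^2 + 1 - b_2|u|$ has a positive lower bound uniformly in $u$ on $[-M, M]$. Viewed as a function of $s = |u| \in [0, M]$, the quadratic $\phi(s) = 2ts^2 + 1 - b_2 s$ attains its minimum $1 - b_2^2/(8t)$ at $s^{*} = b_2/(4t)$, which becomes $\geq 1/2$ as soon as $t \geq b_2^2/4$. Fix such a $t$. Combined with the trivial pointwise bound $e^{tu^2} \geq 1$, one then obtains
\[
  \tfrac{1}{2}\io |\nabla u|^2 \, dx \leq C(M).
\]

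Since $u \in \E$, this yields the desired $\|u\|_{\E}$ estimate, either directly if one takes $\|\nabla \cdot\|_{L^2(\Omega)}$ as the norm on $\E$, or otherwise through Poincaré's inequality. The resulting constant $C$ depends on $\underline u$ and $\overline u$ through $M$ and through the functions $a_2, b_2$ furnished by the Bernstein--Nagumo condition. There is really no obstacle left: all of the cleverness lies in the choice of test function $v_t$, whose differentiation produces the weight $(2tu^2 + 1)e^{tu^2}$ that absorbs the quadratic gradient term $b_2|\nabla u|^2$ on the right hand side once $t$ is chosen large, and whose factor $u$ forces the boundary trace of $v_t$ to vanish so that $v_t \in \E$.
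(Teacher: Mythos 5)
Your proposal is correct and follows exactly the paper's argument: the authors carry out the same computation with the test function $v_t = e^{tu^2}u$ immediately before the theorem statement and simply say ``choosing $t$ large,'' which you have made explicit (correctly) by minimizing $2ts^2 + 1 - b_2 s$ over $s \in [0,M]$ and taking $t \geq b_2^2/4$. No difference in approach worth noting.
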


\begin{Theorem}
	Let $\underline u, \overline u$ be as in Theorem \ref{Theorem a priori bound}. 
	The set $\{u\} \subset \K$ of solutions to (\ref{General inequality weak sense}) with $u \in [\underline u, \overline u]$ is compact in $\E$.  
	\label{Theorem compactness}
\end{Theorem}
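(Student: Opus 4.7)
\emph{Proof sketch.} Let $\{u_n\}$ be an arbitrary sequence in the solution set. By Theorem \ref{Theorem a priori bound} together with the pointwise bound $\|u_n\|_{\Linf} \leq M := \max(\|\underline u\|_{\Linf}, \|\overline u\|_{\Linf})$, the sequence is bounded in $\K$. Reflexivity of $\E$ and Rellich--Kondrachov then produce a subsequence (not relabeled) with $u_n \rightharpoonup u$ in $\E$, $u_n \to u$ in $L^q(\Omega)$ for every $q < 2^*$, and $u_n \to u$ a.e.; the a.e.\ inequality $\underline u \leq u_n \leq \overline u$ passes to the limit, so $u \in \K \cap [\underline u, \overline u]$. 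It thus remains to upgrade this to strong convergence in $\E$ and to verify that $u$ solves (\ref{General inequality weak sense}).

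Testing (\ref{General inequality weak sense}) naively against $v = u_n - u$ produces the term $b_2\io |\nabla u_n|^2 |u_n - u|\,dx$, which is not controlled merely by the $L^1$-bound on $|\nabla u_n|^2$ and a.e.\ convergence $u_n - u \to 0$, since no equi-integrability is available. I would sidestep this exactly as in the proof of Theorem \ref{Theorem a priori bound} and test (\ref{General inequality weak sense}) against the exponentially weighted function
\[
v_n := (u_n - u)\, e^{\lambda (u_n - u)^2} \in \K,\qquad \lambda > b_2^2.
\]
Writing $\nabla v_n = e^{\lambda(u_n - u)^2}\bigl(1 + 2\lambda(u_n - u)^2\bigr)\nabla(u_n - u)$, splitting $\nabla u_n = \nabla(u_n - u) + \nabla u$ on the left of (\ref{General inequality weak sense}), applying the triangle inequality, and estimating $|\nabla u_n|^2 \leq 2|\nabla(u_n - u)|^2 + 2|\nabla u|^2$ on the right yield, after rearrangement,
\[
\io e^{\lambda(u_n - u)^2}\bigl[\,a(x, u_n)(1 + 2\lambda(u_n - u)^2) - 2 b_2 |u_n - u|\,\bigr]|\nabla(u_n - u)|^2\,dx \;\leq\; R_n^{(1)} + R_n^{(2)} + R_n^{(3)},
\]
with $R_n^{(1)} = \io a_2\, e^{\lambda(u_n - u)^2}|u_n - u|\,dx$, $R_n^{(2)} = 2b_2 \io |\nabla u|^2 e^{\lambda(u_n - u)^2}|u_n - u|\,dx$, and $R_n^{(3)} = \bigl|\io a(x, u_n) e^{\lambda(u_n - u)^2}(1 + 2\lambda(u_n - u)^2)\nabla u \cdot \nabla(u_n - u)\,dx\bigr|$.

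The coercivity $a \geq 1$ and the scalar inequality $1 + 2\lambda s^2 - 2b_2 |s| \geq 1 - b_2^2/(2\lambda) \geq 1/2$ (valid for the chosen $\lambda$) make the bracket uniformly at least $1/2$; combined with $e^{\lambda(u_n - u)^2} \geq 1$ this gives $\frac{1}{2}\io |\nabla(u_n - u)|^2\,dx \leq R_n^{(1)} + R_n^{(2)} + R_n^{(3)}$. Each remainder vanishes: $R_n^{(1)}$ and $R_n^{(2)}$ by dominated convergence with dominants $2M e^{4\lambda M^2} a_2 \in L^1$ and $2M e^{4\lambda M^2}|\nabla u|^2 \in L^1$ and $u_n - u \to 0$ a.e.; $R_n^{(3)}$ because its coefficient converges to $a(x, u)\nabla u$ strongly in $L^2$ by dominated convergence (exploiting the uniform $\Linf$ bound on $a(x, u_n)$ over $|u_n| \leq M$, which follows from continuity of $a$ and compactness of $\overline\Omega \times [-M, M]$) while $\nabla(u_n - u) \rightharpoonup 0$ in $L^2$. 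Hence $u_n \to u$ in $\E$, after which passing to the limit in (\ref{General inequality weak sense}) for each fixed $v \in \K$ is routine (now $|\nabla u_n|^2 \to |\nabla u|^2$ in $L^1$), showing that $u$ is itself a solution. The main obstacle is precisely the missing equi-integrability of $|\nabla u_n|^2$; the exponential weight in $v_n$, essential already in Theorem \ref{Theorem a priori bound}, is the device that absorbs this bad term into the positive quadratic form on the left.
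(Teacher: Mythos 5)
Your proof is correct and follows essentially the same route as the paper: the exponentially weighted test function $e^{\lambda(u_n-u)^2}(u_n-u)$ applied to (\ref{General inequality weak sense}), with the scalar inequality $1+2\lambda s^2-2b_2|s|\ge 1-b_2^2/(2\lambda)$ absorbing the quadratic gradient term and weak convergence killing the remainders. The only difference is that you spell out the cross terms $R_n^{(2)}$ and $R_n^{(3)}$ arising from splitting $\nabla u_n=\nabla(u_n-u)+\nabla u$, which the paper's displayed computation silently elides.
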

\begin{proof}
Let $\{u_n\}$ be an arbitrary sequence in the set of solutions to (\ref{General inequality weak sense}) of the theorem.  Applying Theorem \ref{Theorem a priori bound}, we obtain the boundedness in $\E$ of $\{u_n\}$. Assume that
\begin{eqnarray*}
	u_n &\rightharpoonup& u \h \mbox{in } \E,\\
	u_n &\rightarrow& u \h \mbox{in } L^2(\Omega),\\
	u_n &\rightarrow& u \h \mbox{a.e. in } \Omega,
\end{eqnarray*} 
for some $u \in \E.$ It is obvious that $u \in [\underline u, \overline u].$ 

Using $v_t = e^{t(u_n - u)^2}(u_n - u) \in \K$ as a test function for (\ref{General inequality weak sense}), we have
\begin{eqnarray*}
	&&\io e^{t(u_n - u)^2}(2t(u_n - u)^2 + 1)|\nabla (u_n - u)|^2dx\\
	&& \leq \io e^{t(u_n - u)^2}(2t(u_n - u)^2 + 1)a(x, u_n)|\nabla (u_n - u)|^2dx \\
	%&& \h \leq \io e^{t(u_n - u)^2}|f(x, u_n, \nabla u_n)||u_n - u|dx\\
	&&  \leq \io e^{t(u_n - u)^2}(a_2 + b_2|\nabla (u_n - u)|^2)|u_n - u|dx.
\end{eqnarray*}
Letting 
$
	M = \|\overline u - \underline u\|_{\Linf}
$ gives
\begin{eqnarray*}
&&\io e^{t(u_n - u)^2}(2t(u_n - u)^2 + 1 - b_2|u_n - u|)|\nabla (u_n - u)|^2dx \\
&&\leq e^{tM^2}\io a_2|u_n - u|dx.
\end{eqnarray*}
The right hand side of the inequality tends to $0$ for all $t > 0$ by an application of  H\"older's inequality. Choosing $t$ large, we obtain the strong convergence of $\{u_n\}$ to $u$. It is not hard to verify that $u$ is a solution of (\ref{General inequality weak sense}).
\end{proof}

The remark bellow explains how to link the Bernstein-Nagumo condition to the equation under consideration and inequality (\ref{General inequality weak sense}).

\begin{R} Let $\underline u$ and $\overline u$, with $\underline u \leq \overline u,$ be two given continuous functions and let $f$ satisfy a Bernstein-Nagumo condition on $[\underline u, \overline u]$. If $u \in [\underline u, \overline u]$ is a solution of
\[
	|-\Div [ a(x, u)\nabla u]| \leq |f(x, u, \nabla u)|
\] in the classical sense, then (\ref{General inequality weak sense}) is obviously true; therefore, (\ref{a priori bound}) holds and the set of such functions $\{u\}$ is compact in $\E$. 
\end{R}

\section{A sub-supersolution theorem for Dirichlet boundary problems}\label{sec: Dirichlet}

During the  last several years we have studied sub-supersolution theorems for boundary value problems (and  other types of boundary conditions,  like Neumann or Robin) \cite{LevySchmitt:jde1998, LevySchmitt:ejde2002,LevySchmitt:sgc2006,LocSchmitt:RockyMountainJM2010,LocSchmitt:ans2011}. In these papers, we paid attention to the case that the principal part does not depend on $u$. Hence, the presence of the weight $a(x, u)$ makes the results in this paper somewhat more general, although the arguments used to verify them are not significantly more complicated.

Let us recall the concepts of weak subsolution, supersolution and solution to the problem
\begin{equation}
	\left\{
		\begin{array}{rcll}
			-\Div [a(x, u)\nabla u ]&=& f(x, u, \nabla u) &\mbox{in } \Omega,\\
			u &=& 0 &\mbox{on } \partial \Omega.
		\end{array}
	\right.
\label{Dirichlet problem}
\end{equation}

\begin{Definition}
	The function $u \in \W$ is called a weak subsolution (supersolution) of (\ref{Dirichlet problem}) if, and only if:
	\begin{enumerate}
		\item[i.] $u|_{\partial \Omega} \leq (\geq) 0,$
		\item[ii.] for all nonnegative functions $v \in \K$,
		\[
			\io a(x, u)\nabla u\nabla vdx \leq (\geq)\io f(x, u, \nabla u)dx.
		\]
	\end{enumerate}
\end{Definition}

\begin{Definition}
	The function $u \in \E$ is a weak solution if, and only if,
	\[
		\io a(x, u)\nabla u\nabla vdx = \io f(x, u, \nabla u)vdx
	\] for all  $v \in \K$.
\end{Definition}

We have the theorem.
\begin{Theorem}
	Assume that (\ref{Dirichlet problem}) has a subsolution $\underline u$ and a supersolution $\overline u$, both of which are in $C^1(\overline \Omega)$. Assume further that
	\begin{enumerate}
		\item[i.] $\underline u \leq \overline u$ in $\Omega$,
		\item[ii.] $f$ satisfies a Bernstein-Nagumo condition on $[\underline u, \overline u].$
	\end{enumerate}
	Then, (\ref{Dirichlet problem}) has a solution $u \in C^1(\overline \Omega).$
\label{Theorem Dirichlet}
\end{Theorem}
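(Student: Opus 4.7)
The plan is to produce a classical solution of \eqref{Dirichlet problem} in three stages. First, replace the original equation by a doubly truncated auxiliary problem whose solvability falls within the class $(S_+)$ framework of Lemma \ref{Lemma: S+}. Second, show via comparison that every solution of the auxiliary problem lies in $[\underline u, \overline u]$. Third, invoke the Bernstein--Nagumo bound of Theorem \ref{Theorem a priori bound} together with standard quasilinear regularity to remove the remaining artificial cutoffs. Set $M := \max\{\|\underline u\|_\infty, \|\overline u\|_\infty\}$, introduce the pointwise projection $Tu(x) := \max\{\underline u(x), \min\{\overline u(x), u(x)\}\}$ (continuous from $\E$ to $\E$ with $\|Tu\|_\infty \leq M$) and, for integers $k \geq 1$, the radial cutoff $G_k(p) := p\min\{1, k/|p|\}$. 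Following the De Coster--Habets device, \emph{freeze} the gradient argument of $f$ outside $[\underline u, \overline u]$:
\[
\hat f_k(x, u, p) := \begin{cases} f(x, \overline u(x), G_k(\nabla \overline u(x))) & \text{if } u > \overline u(x), \\ f(x, u, G_k(p)) & \text{if } \underline u(x) \leq u \leq \overline u(x), \\ f(x, \underline u(x), G_k(\nabla \underline u(x))) & \text{if } u < \underline u(x). \end{cases}
\]
Since $\underline u, \overline u \in C^1(\overline\Omega)$ and the Bernstein--Nagumo condition applies on $[\underline u(x), \overline u(x)]$, $\hat f_k$ is Carath\'eodory with $|\hat f_k(x, u, p)| \leq a_2(x) + b_2 k^2$, so $u \mapsto \hat f_k(\cdot, u, \nabla u)$ is bounded from $\E$ into $L^2(\Omega) \hookrightarrow \Edual$.

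Existence of a solution $u_k \in \E$ of the auxiliary problem
\[
\langle \mathcal{A}_T u, v \rangle = \io \hat f_k(x, u, \nabla u)\, v \, dx \quad \forall v \in \K
\]
comes from the $(S_+)$ property. The operator $B_k u := \mathcal{A}_T u - \hat f_k(\cdot, u, \nabla u)$ is bounded, coercive (from $a \geq 1$ and Poincar\'e), and pseudomonotone: if $u_n \rightharpoonup u$ in $\E$ with $\limsup\langle B_k u_n, u_n - u\rangle \leq 0$, the Rellich theorem and the $L^2$-bound on $\hat f_k$ force $\langle \hat f_k(\cdot, u_n, \nabla u_n), u_n - u\rangle \to 0$, whence $\limsup\langle \mathcal{A}_T u_n, u_n - u\rangle \leq 0$ and Lemma \ref{Lemma: S+} gives $u_n \to u$ strongly in $\E$. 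Brezis's surjectivity theorem for coercive pseudomonotone operators then yields $u_k \in \E$ with $B_k u_k = 0$, and De Giorgi--Nash--Moser iteration (using the $L^2$-bound on $\hat f_k$) places $u_k$ in $\Linf$.

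To establish $\underline u \leq u_k \leq \overline u$, test the auxiliary equation with $w := (u_k - \overline u)^+ \in \K$ (admissible because $u_k = 0 \leq \overline u$ on $\partial\Omega$). On $\{u_k > \overline u\}$ we have $Tu_k = \overline u$, and provided $k \geq \|\nabla \overline u\|_\infty$ also $\hat f_k(\cdot, u_k, \nabla u_k) = f(\cdot, \overline u, \nabla \overline u)$; writing $\nabla u_k = \nabla w + \nabla \overline u$ rewrites the auxiliary equation as
\[
\io a(x, \overline u)|\nabla w|^2 \, dx + \io a(x, \overline u)\nabla \overline u \cdot \nabla w \, dx = \io f(x, \overline u, \nabla \overline u)\, w \, dx,
\]
and subtracting the supersolution inequality $\io a(x, \overline u)\nabla \overline u \cdot \nabla w \, dx \geq \io f(x, \overline u, \nabla \overline u)\, w \, dx$ yields $\io a(x, \overline u)|\nabla w|^2 \, dx \leq 0$, hence $w \equiv 0$. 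The symmetric argument with $(\underline u - u_k)^+$ gives $u_k \geq \underline u$. Once $u_k \in [\underline u, \overline u]$, the $u$-truncation in $\hat f_k$ is inactive and $u_k$ satisfies $|-\Div[a(x, u_k)\nabla u_k]| \leq a_2(x) + b_2 |\nabla u_k|^2$; Theorem \ref{Theorem a priori bound} gives a $k$-uniform $\E$-bound and Ladyzhenskaya--Ural'tseva quasilinear regularity upgrades this to a uniform $\sup_k \|\nabla u_k\|_\infty \leq C_0$. Choosing $k \geq \max(\|\nabla \underline u\|_\infty, \|\nabla \overline u\|_\infty, C_0)$ deactivates $G_k$ on every gradient that appears, and $u_k$ is the desired $C^1(\overline\Omega)$ solution of \eqref{Dirichlet problem}. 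The main obstacle is the containment step: the gradient argument of $f$ must be \emph{frozen} (not merely truncated) outside $[\underline u, \overline u]$, since a naive cutoff $f(x, Tu, G_k(p))$ would leave an uncontrolled residual $[f(x, \overline u, G_k(\nabla u_k)) - f(x, \overline u, \nabla \overline u)]w$ in the comparison inequality, which is not obviously non-positive when $f$ is only Carath\'eodory in $p$.
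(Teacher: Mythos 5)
Your proposal is correct and follows essentially the same route as the paper: the truncation $T$, the radial gradient cutoff (the paper's $h_n(\nabla Tu)$ is exactly your ``frozen'' nonlinearity $\hat f_k$), solvability via an $(S_+)$/pseudomonotone operator argument, the comparison step with $(u_k-\overline u)^+$ under $k\geq\max\{\|\nabla\underline u\|_\infty,\|\nabla\overline u\|_\infty\}$, and finally the Bernstein--Nagumo a priori bounds. The only cosmetic difference is at the end, where the paper removes the cutoff by letting $n\to\infty$ and using the compactness of Theorem \ref{Theorem compactness}, whereas you invoke a $k$-uniform gradient sup-bound to deactivate $G_k$ for a single large $k$; both rest on the same Ladyzhenskaya--Ural'tseva regularity.
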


The proof of this theorem is a combination of arguments used  in \cite{LevySchmitt:jde1998, LevySchmitt:ejde2002,LevySchmitt:sgc2006},  where the dependence of $f$ on the gradient term $\nabla u$ was not assumed and those in \cite{LocSchmitt:ans2011}, where $f$ may depend upon $\nabla u.$

\begin{proof}[Proof of Theorem \ref{Theorem Dirichlet}]
Define
\[
	h_n(p) := \left\{
				\begin{array}{ll}
					p & |p| \leq n\\
					\frac{np}{|p|} & |p| > n
				\end{array}
			\right. \h \mbox{for all } n \geq 1, p \in \mathbb{R}^N,
\]
and
\[
	Tu: = \max\{\min\{u, \overline u\}, \underline u\} \h \mbox{for all } u \in \E.
\] 
It is obvious that $T: \E \rightarrow \E$ is continuous and $T(\E)$ is bounded in $\Linf$. Hence, the map $\mathcal{A}_T,$ defined in (\ref{cal a T}), is of class $(S_+)$ by Lemma \ref{Lemma: S+}.
%Let
%\[
%	\tilde a(x, u) = a(x, Tu)
%\] and 
%\[
%	\tilde f_n(x, u , \nabla u) = f(x, Tu, h_n(\nabla Tu)).
%\]

Consider the map $\mathcal{L}_n: \E \rightarrow \Edual,$ defined by
\[
	\langle \mathcal{L}_nu, v\rangle := \io a(x, Tu)\nabla u\nabla vdx - \io f(x, Tu, h_n(\nabla Tu))vdx.
\] 
It follows from the continuity of $T$ and $\mathcal{A}_T$ and the growth condition of $f$ in (\ref{Bernstein-Nagumo condition}) that $\mathcal{L}_n$ is demicontinuous; i.e.
if $u_m \rightarrow u$ in $\E$ as $m \rightarrow \infty ,$ then
\[
	\lim_{m \rightarrow \infty}\langle \mathcal{L}_nu_m, v\rangle = \langle \mathcal{L}_nu, v\rangle.
\] 
Moreover, since $\mathcal{A}_T$ is of class $(S_+)$, so is $\mathcal{L}_n$ because of (\ref{Bernstein-Nagumo condition}). 
Moreover, since the weight $a$ is such that $a\geq 1$, $\mathcal{L}_n$ is coercive. Employing the topological degree defined by Browder \cite{Browder:bams1983} and arguing as in  \cite{LocSchmitt:ans2011}, we can find  a zero of $\mathcal{L}_n$ in $\E$, called $u_n$. In other words, $u_n$ is a solution of
\begin{equation}
	\left\{
		\begin{array}{rcll}
			-\Div [a(x, Tu_n)\nabla u_n] &=& f(x, Tu_n, h_n(\nabla Tu_n)) &\mbox{in } \Omega,\\
			u_n &=& 0 &\mbox{on } \partial \Omega.
		\end{array}
	\right.
\label{Approximate Dirichlet problem}
\end{equation}

We next prove that $u_n \in [\underline u, \overline u],$ for $n$ sufficiently large. In fact, using the test function $v = (u_n - \overline u)^+ \in \E$ in (\ref{Approximate Dirichlet problem}) gives
\begin{eqnarray*}
	\io a(x, \overline u)\nabla u_n\nabla (u_n - \overline u)^+dx &=& \io a(x, Tu_n)\nabla u_n\nabla (u_n - \overline u)^+dx 
	\\&=& \io f(x, Tu_n, h_n(\nabla Tu_n))(u_n - u)^+dx\\
	&=& \io f(x, \overline u, h_n(\nabla \overline u))(u_n - u)^+dx.
\end{eqnarray*}
We now consider $n$ so large such that 
\begin{equation}
	n \geq \max\{\|\nabla \underline u\|_{\Linf}, \|\nabla \overline u\|_{\Linf}\}
\label{C1 requirement}
\end{equation} and therefore
\[
	h_n(\nabla \overline u) = \nabla \overline u.
\]
This implies
\begin{eqnarray*}
	\io a(x, \overline u)\nabla u_n\nabla (u_n - \overline u)^+dx &=& \io f(x, \overline u, \nabla \overline u)(u_n - u)^+dx\\
	&\leq& \io a(x, \overline u)\nabla \overline u\nabla (u_n - \overline u)^+dx
\end{eqnarray*}
and
\[
	\io a(x, \overline u)\nabla |(u_n - u)^+|^2dx \leq 0.
\] We have obtained $u_n \leq \overline u$ a.e. in $\Omega.$ Similarly, with $n$ satisfying (\ref{C1 requirement}), $u_n \geq \underline u.$

Since $u_n \in [\underline u, \overline u],$ $Tu_n = u_n$ when $n$ is large. For such $n$, $u_n$ solves
\begin{equation}
	\left\{
		\begin{array}{rcll}
			-\Div [ a(x, u_n)\nabla u_n ]&=& f(x, u_n, h_n(\nabla u_n)) &\mbox{in } \Omega,\\
			u_n &=& 0 &\mbox{on } \partial \Omega.
		\end{array}
	\right.
\label{Approximate Dirichlet problem 1}
\end{equation}
Noting that $|h_n(p)| \leq |p|$ for all $p \in \mathbb{R}^N$, we see that $u_n$ satisfies (\ref{General inequality weak sense}). Hence, by Theorem \ref{Theorem a priori bound} and Theorem \ref{Theorem compactness}, $u_n \rightarrow u$ in $\E$ for some function $u$, which is obviously a solution to (\ref{Dirichlet problem}). Moreover, the zero boundary value of $u$ and the uniform boundedness of $u$ imply that $u \in C^1(\overline \Omega)$ (see \cite{LadyzhenskayaUraltseva:sv1985}).
\end{proof}

\begin{R} The $C^1$ requirements for the pair of sub-supersolution in Theorem \ref{Theorem Dirichlet} is necessary because of (\ref{C1 requirement}). In the case that $f$ does not depend on $\nabla u$, this smoothness condition can be relaxed.
\end{R}

By a simple substitution, say $v = u - c$ for any constant $c$, we have the following theorem.
\begin{Theorem}
	Assume that 
\begin{equation}
	\left\{
		\begin{array}{rcll}
			-\Div [a(x, u)\nabla u ]&=& f(x, u, \nabla u) &\mbox{in } \Omega,\\
			u &=& c &\mbox{on } \partial \Omega
		\end{array}
	\right.
\label{Dirichlet problem c}
\end{equation}	
has a subsolution $\underline u$ and a supersolution $\overline u$, both of which are in $C^1(\overline \Omega)$. Assume further that
	\begin{enumerate}
		\item[i.] $\underline u \leq \overline u$ in $\Omega$,
		\item[ii.] $f$ satisfies a Bernstein-Nagumo condition on $[\underline u, \overline u].$
	\end{enumerate}
	Then, (\ref{Dirichlet problem c}) has a solution $u \in C^1(\overline \Omega).$
\label{Theorem Dirichlet c}
\end{Theorem}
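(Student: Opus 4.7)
The plan is to reduce Theorem \ref{Theorem Dirichlet c} to the already established Theorem \ref{Theorem Dirichlet} by a translation of the unknown. Set $v = u - c$; then $u$ solves \eqref{Dirichlet problem c} if and only if $v$ solves
\[
\left\{
\begin{array}{rcll}
-\Div[\tilde a(x, v)\nabla v] &=& \tilde f(x, v, \nabla v) &\mbox{in } \Omega,\\
v &=& 0 &\mbox{on } \partial \Omega,
\end{array}
\right.
\]
where $\tilde a(x, s) := a(x, s + c)$ and $\tilde f(x, s, p) := f(x, s + c, p)$. The goal is then to verify that $\tilde a$ and $\tilde f$ meet all the hypotheses under which Theorem \ref{Theorem Dirichlet} was proved, and that the translated pair $\underline v := \underline u - c$, $\overline v := \overline u - c$ provides a well-ordered sub-supersolution pair in $C^1(\overline\Omega)$ for the translated problem.

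First I would check the structural assumptions on the weight. Coercivity \eqref{coercive} is unaffected by a shift in the second variable, so $\tilde a(x, s) \geq 1$. The growth bound \eqref{a: growth condition} for $\tilde a$ follows from $|s + c| \leq |s| + |c|$, giving $\tilde a(x, s) \leq a_1(x)|s| + (|c|\,a_1(x) + b_1(x))$, and the new $L^2$ term $|c|\,a_1 + b_1$ is again in $L^2(\Omega)$ since $\Omega$ is bounded and $a_1 \in \Linf$. Next I would verify that $\tilde f$ satisfies the Bernstein-Nagumo condition \eqref{Bernstein-Nagumo condition} on $[\underline v, \overline v]$: indeed, for $s \in [\underline v(x), \overline v(x)]$ we have $s + c \in [\underline u(x), \overline u(x)]$, hence $|\tilde f(x, s, p)| = |f(x, s + c, p)| \leq a_2(x) + b_2|p|^2$ with the same $a_2$ and $b_2$ furnished by the Bernstein-Nagumo hypothesis on $[\underline u, \overline u]$.

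It remains to check that $\underline v$ and $\overline v$ are respectively a weak subsolution and a weak supersolution of the translated problem. Since $\nabla \underline v = \nabla \underline u$ and $\tilde a(x, \underline v) = a(x, \underline u)$, the required weak inequality
\[
\io \tilde a(x, \underline v) \nabla \underline v \cdot \nabla \varphi \, dx \leq \io \tilde f(x, \underline v, \nabla \underline v) \varphi\, dx
\]
for nonnegative $\varphi \in \K$ is identical to the subsolution inequality for $\underline u$ relative to \eqref{Dirichlet problem c}; the boundary inequality $\underline v|_{\partial\Omega} \leq 0$ follows from $\underline u|_{\partial\Omega} \leq c$. The analogous check works for $\overline v$. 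Well-orderedness $\underline v \leq \overline v$ and $C^1(\overline\Omega)$ regularity are preserved by translation.

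With these verifications in hand, Theorem \ref{Theorem Dirichlet} applied to the translated data produces a solution $v \in C^1(\overline\Omega)$, and $u := v + c$ is the desired solution to \eqref{Dirichlet problem c}. There is no genuine obstacle here, since the argument is a bookkeeping reduction; the only mild care needed is in confirming that the $L^2$ growth bound on the weight survives the shift, and that the Bernstein-Nagumo constants can be taken unchanged, both of which are immediate.
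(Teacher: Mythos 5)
Your proposal is correct and is exactly the paper's argument: the authors also obtain Theorem \ref{Theorem Dirichlet c} from Theorem \ref{Theorem Dirichlet} via the substitution $v = u - c$, defining sub- and supersolutions of (\ref{Dirichlet problem c}) precisely so that the translated pair works for the zero-boundary problem. Your verifications of the shifted coercivity, growth, and Bernstein--Nagumo conditions just make explicit the bookkeeping the paper leaves to the reader.
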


In the theorem above, $\underline{u} \in H^1(\Omega)$ is a subsolution of (\ref{Dirichlet problem c}) if and only if $\underline v = \underline u - c$ is a subsolution of
\[
\left\{
		\begin{array}{rcll}
			-\Div [a(x, v+c)\nabla v ]&=& f(x, v+c, \nabla v) &\mbox{in } \Omega,\\
			v &=& 0 &\mbox{on } \partial \Omega.
		\end{array}
	\right.
\] The concepts of supersolution and solution to (\ref{Dirichlet problem c}) are defined in the same manner.

\section{A sub-supersolution theorem for no-flux problems}

In this section, we are concerned with 
\begin{equation}
	\left\{
		\begin{array}{rcll}
			-\Div [a(x, u)\nabla u ]&=& f(x, u, \nabla u) &\mbox{in } \Omega,\\
			u &=& {\rm constant} &\mbox{on } \partial \Omega,\\
			\ds\int_{\partial \Omega} a(\xi, u)\partial_{\nu}d\sigma(\xi) &=& 0,
		\end{array}
	\right.
\label{no-flux problem}
\end{equation}
where the constant value of $u$ on $\partial \Omega$ is not specified. This suggests  to use the functional space
\begin{equation}
	V = \{v \in H^1(\Omega): v|_{\partial \Omega} = {\rm constant}\}
\label{v}
\end{equation} to study (\ref{no-flux problem}).

If $u \in C^2(\Omega) \cap C^1(\overline \Omega)$ is a classical solution to 
\begin{equation}
	\left\{
		\begin{array}{rcll}
			-\Div [a(x, u)\nabla u ]&=& f(x, u, \nabla u) &\mbox{in } \Omega,\\
			u &=& c &\mbox{on } \partial \Omega,
		\end{array}
	\right.
\label{5.3}
\end{equation} where $c$ is a constant, then 
\begin{equation}
	\int_{\partial \Omega} a(\xi, c)\partial_\nu ud\sigma = -\io f(x, u, \nabla u) dx
\label{boundary expression}
\end{equation}
by the divergence theorem. We have  the following lemma.
\begin{Lemma}  If $u \in H^1(\Omega) \cap \Linf$ solves equation (\ref{5.3}) in the weak sense, the identity (\ref{boundary expression}) is still true.
\label{Lemma boundary expression}
\end{Lemma}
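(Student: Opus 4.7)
The plan is to interpret the boundary integral $\int_{\partial\Omega} a(\xi, c)\partial_\nu u\, d\sigma$ as the duality pairing of the (distributional) normal trace of the vector field $\mathbf{F}(x) := a(x, u(x))\nabla u(x)$ against the boundary trace of a suitable $H^1$ test function, and then to specialize that test function to the constant $1$ in order to read off (\ref{boundary expression}).

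First I would collect the integrability facts that make everything well-defined: since $u \in \Linf$, the growth condition (\ref{a: growth condition}) yields $a(\cdot, u) \in \Linf$, so $\mathbf{F} \in L^2(\Omega;\mathbb{R}^N)$; and the Bernstein-Nagumo bound (\ref{Bernstein-Nagumo condition}) together with $\nabla u \in L^2(\Omega)$ gives $f(\cdot, u, \nabla u) \in L^1(\Omega)$. Next, for any $\phi \in \W \cap \Linf$ whose boundary trace equals $1$, I would set
\[
I(\phi) := \io a(x, u)\nabla u\cdot\nabla\phi\, dx - \io f(x, u, \nabla u)\phi\, dx,
\]
which is finite because $\phi \in \Linf$ and $f \in L^1(\Omega)$. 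If $\phi_1,\phi_2$ are two such extensions of the boundary value $1$, then $\phi_1 - \phi_2 \in \K$, and the weak formulation of (\ref{5.3}) tested against $\phi_1 - \phi_2$ shows $I(\phi_1) = I(\phi_2)$; thus $I(\phi)$ depends only on the trace of $\phi$. Formal integration by parts identifies $I(\phi)$ with $\int_{\partial\Omega} a(\xi, u)\partial_\nu u\cdot\phi\, d\sigma$, giving the correct weak sense of the left hand side of (\ref{boundary expression}) (note that $a(\xi, u) = a(\xi, c)$ on $\partial\Omega$ because $u = c$ there).

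The identity is then obtained by choosing the trivial extension $\phi \equiv 1$ on $\overline\Omega$, which lies in $\W \cap \Linf$ with $\nabla\phi \equiv 0$: the first term in $I(\phi)$ vanishes, and
\[
\int_{\partial\Omega} a(\xi, c)\partial_\nu u\, d\sigma \;=\; I(1) \;=\; -\io f(x, u, \nabla u)\, dx,
\]
which is precisely (\ref{boundary expression}). The one genuinely delicate point is that $u \in \W$ does not by itself provide a classical normal derivative on $\partial\Omega$, so the substance of the lemma is really the construction of $I(\cdot)$ and verification that it depends only on the boundary trace of $\phi$; once this consistency is in hand, the equality follows from the trivial test function $\phi\equiv1$.
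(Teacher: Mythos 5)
Your reduction is tidy as far as it goes: the functional $I(\phi)$ is well defined, it depends only on the trace of $\phi$ (the difference of two extensions lies in $\E\cap\Linf$ and is an admissible test function in the weak formulation of (\ref{5.3})), and $I(1)=-\io f(x,u,\nabla u)\,dx$. But these two facts are essentially a restatement of the weak formulation; the entire content of the lemma is the step you label ``formal integration by parts,'' namely the identification of $I(1)$ with the \emph{classical} surface integral $\int_{\partial\Omega}a(\xi,c)\,\partial_\nu u\,d\sigma$. That identification does not come for free. A function $u\in\W$ has no normal derivative at all, and even after one upgrades $u$ to $C^1(\overline\Omega)$ by elliptic regularity, the field $a(\cdot,u)\nabla u$ is only continuous up to $\partial\Omega$ with a distributional divergence in $L^1(\Omega)$, so the Gauss--Green formula with the pointwise normal trace is exactly what must be proved, not invoked. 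Nor can one simply \emph{define} the left-hand side of (\ref{boundary expression}) to be $I(1)$ (which would make the lemma a tautology): the later no-flux argument compares $\partial_\nu u$ and $\partial_\nu\underline u$ pointwise on $\partial\Omega$ via the maximum principle, so the classical meaning of the boundary integral is the one that is actually used.

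The paper's proof is devoted precisely to the step you skip. It first notes that $u\in\Linf$ with constant boundary values implies $u\in C^1(\overline\Omega)$, so the surface integral is classically meaningful. It then tests the equation not with the constant extension but with the boundary-layer functions $v_n=\alpha_n\circ\delta\in\K$, where $\delta$ is the distance to $\partial\Omega$ and $\alpha_n$ truncates at $1/n$; after multiplying by $n$, the volume term $n\int_{\Omega_{1/n}}a(x,u)\nabla u\cdot\nabla v_n\,dx$ is rewritten in the collar coordinates $x=\xi-\rho\nu(\xi)$, and using $\nabla\delta=-\nu$, the Jacobian expansion $1+O(1/n)$, and the continuity of $a(\cdot,u)\nabla u$ up to the boundary, it converges to $-\int_{\partial\Omega}a(\xi,c)\partial_\nu u\,d\sigma$, while the right-hand side converges to $\io f\,dx$. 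In your language, this amounts to evaluating $I$ on the extensions $1-n\,(\alpha_n\circ\delta)$ of the trace $1$, which localize the volume integral to a shrinking neighborhood of $\partial\Omega$ and thereby produce the surface integral in the limit; choosing instead $\phi\equiv1$ produces the other side of the identity, and the lemma is exactly the assertion that the two agree. To complete your proof you would need to carry out this limiting computation (or cite a generalized Gauss--Green theorem for continuous divergence-measure fields), together with the regularity statement $u\in C^1(\overline\Omega)$ that makes $\partial_\nu u$ meaningful in the first place.
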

\begin{proof}
Since $u \in \Linf$ and $u|_{\partial \Omega},$ $u$ belongs to $C^1(\overline \Omega)$ (see \cite{LadyzhenskayaUraltseva:sv1985}). This explains the well-definedness of the boundary expression in the left hand side of (\ref{boundary expression}).

For $n \geq 1$, define the function
	\[	\alpha_n(s) = 
		\left\{
			\begin{array}{ll}
				s & 0\leq s <\frac{1}{n}\\
				1/n & s \geq \frac{1}{n},
			\end{array}
		\right.
	\]
and for each $x \in \Omega,$ let $\delta(x)$ denote the Euclidean distance from $x$ to $\partial \Omega$. 
It follows from the smoothness of $\partial \Omega,$ that $\delta$ is smooth on a neighborhood of $\partial \Omega$ (see \cite{GilbargTrudinger:1977}). 
Using 
\[v_n = \alpha_n\circ\delta \in H^1_0(\Omega) \cap L^{\infty}(\Omega)
\] as a test function in the variational formulation of (\ref{5.3}) gives
\begin{equation}
	n\int_{\Omega_{\frac{1}{n}}} a(x, u)\nabla u\nabla v_n dx = n\io f(x, u, \nabla u)v_ndx  
	\label{0.3}
\end{equation} 
for all $n \geq 1,$ where
\[
	\Omega_{\frac{1}{n}} = \left \{x \in \Omega: \delta(x) < \frac{1}{n}\right \}.
\]
It is obvious that 
\begin{equation}
	\lim_{n \rightarrow \infty}n\io f(x, u, \nabla u)v_ndx  = \io f(x, u, \nabla u)dx.
\label{0.4}
\end{equation}
We next evaluate the limit of the left hand side of (\ref{0.3}) as $n \rightarrow \infty$ by the method of substitution. For each $n$ large, define the map $P$ that sends $x \in \Omega_{\frac{1}{n}}$ to $(\xi, \rho),$ where $\xi$ is the projection of $x$ on $\partial \Omega$ and $\rho = \delta(x)$. The map $P^{-1}(\xi, \rho)$ is given by
\[
	(\xi, \rho) \mapsto \xi - \rho \nu(\xi).
\] Thus, if we let $T_{\xi}$ be the tangent space to $\partial \Omega$ at $\xi$ and $B(\xi)$ be 
the orthonormal basis  of $\mathbb{R}^N,$ defined by an orthonormal basis of $T_{\xi}$ and $\nu(\xi)$, then
\[
	mat_{B(\xi)}(DP^{-1}(\xi, \rho)) = \left[	
				\begin{array}{cc}
					Id + \rho D\nu(\xi) & 0\\
					* & 1\\
				\end{array}
			\right].
\] Hence,
\[
	\det DP^{-1}(\xi, \rho) = 1 + \rho \mbox{div} D\nu(\xi) + O(\rho^2) = 1 + O(\rho) = 1 + O\left (\frac{1}{n}\right ),
\] because $\mbox{div} D\nu(\xi)$ does not depend on $n.$
We now write the left hand side of (\ref{0.3}) as
\begin{eqnarray*}
	&&n\int_{\Omega_{\frac{1}{n}}} a(x, u)\nabla u\nabla v_n dx \\
	&&  = n \int_{\partial \Omega}\int_0^{\frac{1}{n}} a(\xi + \rho \nu(\xi), u)\nabla u(\xi + \rho \nu(\xi))\alpha'_n(\rho)\nabla \delta(\xi + \rho \nu(\xi))
	\\&&\hspace*{2.5in} \times\det DP^{-1}(\xi + \rho \nu(\xi))d\rho d\sigma\\
	&& = n \int_{\partial \Omega}\int_{0}^{\frac{1}{n}} a(\xi + \rho \nu(\xi), u)\nabla u(\xi + \rho \nu(\xi))\alpha'_n(\rho)\nabla \delta(\xi + \rho \nu(\xi))(1 + O\left (\frac{1}{n}\right )d\rho d\sigma.
\end{eqnarray*}
We observe that  when $x$ is in $\Omega_{\frac{1}{n}}$, $\alpha_n'(\delta(x)) = 1$ and $\nabla (\delta(x)) = -\nu(\xi),$ hence we may let $n \rightarrow \infty$ to obtain
\[
	\lim_{n \rightarrow \infty} n\int_{\Omega_{\frac{1}{n}}} a(x, u)\nabla u\nabla v_n dx = - \int_{\partial \Omega} a(\xi, u)\nabla u \nu d\sigma.
\] This, together with (\ref{0.3}) and (\ref{0.4}), shows (\ref{boundary expression}).
\end{proof}

We next discuss the concept of subsolution and supersolution for (\ref{no-flux problem}). 
\begin{Definition}
	The function $u \in V \cap C^1(\overline \Omega)$ is called a subsolution (supersolution) to (\ref{no-flux problem}) if, and only if,
	\begin{enumerate}
		\item[i.] for all nonnegative functions $v \in \E \cap \Linf$,
		\[
			\io a(x, u)\nabla u\nabla v dx \leq (\geq) \io f(x, u, \nabla u)v dx,
		\]
		\item[ii.] 
		\[
			\int_{\partial \Omega} a(\xi, u)\partial_{\nu}ud\sigma \leq (\geq) 0.
		\]
	\end{enumerate}
\end{Definition}

\begin{Definition}
	The function $u \in V \cap C^1(\overline \Omega)$ is called a solution to (\ref{no-flux problem}) if, and only if,
	\begin{enumerate}
	\item[i.] for all functions $v \in \E \cap \Linf$,
		\[
			\io a(x, u)\nabla u\nabla v dx = \io f(x, u, \nabla u)v dx,
		\]
		\item[ii.]
		\[
			\int_{\partial \Omega} a(\xi, u)\partial_{\nu}ud\sigma = 0.
		\]
	\end{enumerate}
\end{Definition}

The following is our main result in this section.
\begin{Theorem}
Assume that (\ref{no-flux problem}) has a subsolution $\underline u$ and a supersolution $\overline u$. Assume further that
	\begin{enumerate}
		\item[i.] $\underline u \leq \overline u$ in $\Omega$,
		\item[ii.] $f$ satisfies a Bernstein-Nagumo condition on $[\underline u, \overline u].$
	\end{enumerate}
	Then, (\ref{no-flux problem}) has a solution $u \in C^1(\overline \Omega).$
\label{Theorem no flux}	
\end{Theorem}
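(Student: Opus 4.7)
The plan is to reduce the no-flux problem to a one-parameter family of Dirichlet problems and then select a member of that family whose boundary flux vanishes. Since $\underline u, \overline u \in V$ they are constant on $\partial\Omega$; write $\underline c := \underline u|_{\partial\Omega}$ and $\overline c := \overline u|_{\partial\Omega}$, and note $\underline c \leq \overline c$ because $\underline u \leq \overline u$ in $\Omega$. For each $c \in [\underline c, \overline c]$, the pair $(\underline u, \overline u)$ is a sub-/super-solution of the Dirichlet problem with boundary datum $c$: the PDE inequalities carry over verbatim from the no-flux sub/super definition, and on $\partial\Omega$ one has $\underline u \equiv \underline c \leq c \leq \overline c \equiv \overline u$. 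Theorem \ref{Theorem Dirichlet c} then produces a $C^1(\overline\Omega)$ solution $u_c$ with $\underline u \leq u_c \leq \overline u$, and Lemma \ref{Lemma boundary expression} gives
\[
F(c) := \int_{\partial\Omega} a(\xi, c)\,\partial_{\nu}u_c\, d\sigma \;=\; -\io f(x, u_c, \nabla u_c)\, dx.
\]

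Next I would sign $F$ at the endpoints using the flux-monotonicity property flagged in the introduction. At $c = \underline c$, the function $w := u_{\underline c} - \underline u$ is non-negative in $\Omega$ and vanishes on $\partial\Omega$, so it attains its minimum on the boundary and $\partial_{\nu} w \leq 0$ pointwise there. Since $u_{\underline c} = \underline u = \underline c$ on $\partial\Omega$, we have $a(\xi, u_{\underline c}) = a(\xi, \underline u)$ on $\partial\Omega$, and positivity of $a$ integrates this pointwise inequality to
\[
F(\underline c) = \int_{\partial\Omega} a(\xi, u_{\underline c})\,\partial_{\nu}u_{\underline c}\, d\sigma \;\leq\; \int_{\partial\Omega} a(\xi, \underline u)\,\partial_{\nu}\underline u\, d\sigma \;\leq\; 0,
\]
the last inequality being the no-flux sub-solution condition on $\underline u$. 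The symmetric argument with $\overline u$ and $u_{\overline c}$ gives $F(\overline c) \geq 0$.

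The hard part will be to pass from these endpoint inequalities to the existence of some $c^{*} \in [\underline c, \overline c]$ and an associated solution $u_{c^{*}}$ with $F(c^{*})=0$, since Theorem \ref{Theorem Dirichlet c} does not provide a unique, let alone continuous, selection $c \mapsto u_c$. My plan here is to work on the graph set
\[
\mathcal S := \{(c, u) \in [\underline c, \overline c] \times C^1(\overline\Omega) : u \text{ solves the Dirichlet problem with datum } c, \ \underline u \leq u \leq \overline u\}
\]
and show that it contains a closed connected subset whose projection to the first coordinate is the whole interval $[\underline c, \overline c]$. Compactness of $\mathcal S$ in $[\underline c, \overline c] \times \E$ follows from Theorem \ref{Theorem compactness}, upgraded to $C^1(\overline\Omega)$-convergence by the Ladyzhenskaya-Ural'tseva estimates invoked at the end of the proof of Theorem \ref{Theorem Dirichlet}. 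Connectedness across the interval is the delicate step, and I would obtain it by a Leray-Schauder continuation argument in the Browder $(S_+)$-degree framework already set up in Lemma \ref{Lemma: S+} and used in the proof of Theorem \ref{Theorem Dirichlet}: the truncated parameter family of operators $\mathcal L_n(c, \cdot) : \E \to \Edual$ associated with the Dirichlet problem has nonzero degree on a fixed large ball, uniformly in $c$, so Rabinowitz-type continuation produces a continuum of solutions hitting every $c \in [\underline c, \overline c]$. Once such a continuum $\mathcal S_0 \subset \mathcal S$ is in hand, the continuous functional $(c, u) \mapsto \int_{\partial\Omega} a(\xi, c)\,\partial_{\nu}u\, d\sigma$ takes both a non-positive and a non-negative value on $\mathcal S_0$ and therefore, by connectedness, also the value $0$; the corresponding pair $(c^{*}, u_{c^{*}})$ is a solution of (\ref{no-flux problem}).
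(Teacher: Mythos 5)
Your reduction to the one-parameter family of Dirichlet problems, the use of Theorem \ref{Theorem Dirichlet c} and Lemma \ref{Lemma boundary expression}, and the endpoint sign computation via the flux-monotonicity of ordered functions agreeing on $\partial\Omega$ all coincide with the paper's argument. The divergence, and the genuine gap, is in the intermediate-value step. You ask for a closed connected subset of the solution graph $\mathcal S$ projecting onto all of $[\underline c,\overline c]$, to be produced by a Rabinowitz-type continuation in the Browder $(S_+)$-degree. But the degree framework in the proof of Theorem \ref{Theorem Dirichlet} only produces zeros of the \emph{truncated} operators $\mathcal L_n$; solutions of the actual Dirichlet problem are obtained afterwards as limits as $n\to\infty$. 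A continuum of zeros of $\mathcal L_n(c,\cdot)$ for fixed $n$ (even granting a continuation theorem in the $(S_+)$ setting, which you assert rather than cite) is a continuum of solutions of the problem with nonlinearity $f(x,u,h_n(\nabla u))$, not of (\ref{Dirichlet problem c}); connectedness does not automatically survive the passage $n\to\infty$, and repairing this requires an additional Whyburn-type limit argument on sequences of continua together with uniform endpoint sign estimates for the truncated fluxes. None of this is carried out, and it is precisely the hard part you flagged.

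The paper avoids connectedness altogether by exploiting the order structure. Assuming no solution of (\ref{no-flux problem}) exists, it splits $U=U^1\cup U^2$ by the sign of the flux, shows both pieces are compact (Theorem \ref{Theorem compactness} plus Lemma \ref{Lemma boundary expression}, which converts the flux into the $H^1$-continuous quantity $-\io f(x,u,\nabla u)\,dx$), and sets $t_*=\sup\{t: u_t\in U^1\}$. Compactness yields $u_{t_*}\in U^1$ at the critical datum; then, crucially, $u_{t_*}$ and $\overline u$ are used as a \emph{new} sub-supersolution pair for the problems with $t>t_*$, producing a decreasing sequence of solutions from $U^2$ whose limit $v_{t_*}$ lies in $U^2$, has the same boundary value as $u_{t_*}$, and dominates it. The flux-monotonicity then gives $0<\Phi(v_{t_*})\le\Phi(u_{t_*})<0$, a contradiction. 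If you want to salvage your route, you either need to supply the continuation-plus-limit machinery sketched above, or replace the connectedness claim by this comparison-principle construction of two ordered solutions with opposite flux signs at the critical parameter.
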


\begin{proof}
	Let $\alpha = \underline u|_{\partial \Omega}$ and $\beta = \overline u|_{\partial \Omega}$. For each $t \in [0, 1],$ define
	\[
		c_t = t\beta + (1 - t)\alpha.
	\]
Applying Theorem \ref{Theorem Dirichlet c}, we can find $u_t \in C^1(\overline \Omega)$ solving
\begin{equation}
	\left\{
		\begin{array}{rcll}
			-\Div [ a(x, u_t) \nabla u_t ]&=& f(x, u_t, \nabla u_t) &\mbox{in } \Omega,\\
			u_t &=& c_t &\mbox{on } \partial \Omega.
		\end{array}
	\right.
\label{Dirichlet problem ct}
\end{equation}
Let $U_t$ be the set of such solutions $u_t$ and $U = \cup_{t \in [0, 1]}U_t.$  Let $U^1$ (respectively  $U^2$) denote the set of solution $u_t \in V$ of (\ref{Dirichlet problem ct}) so that
	\[
		\int_{\partial \Omega} a(\xi, u)\partial_{\nu} d\sigma < (\mbox{respectively} >) 0.
	\]
Suppose that (\ref{no-flux problem}) has no solution staying between $\underline u$ and $\overline u$. 
Then 
\[
	U = U^1 \cup U^2.
\]
Theorem \ref{Theorem compactness} implies that $U$ is compact in $H^1(\Omega)$.
This, together with Lemma \ref{Lemma boundary expression}, shows that both $U^1$ and $U^2$ are also compact in $H^1(\Omega).$

If $u \in U_0$ then $u = \underline u$ on $\partial \Omega$ and, since $u \geq \underline u$,
\[
	\int_{\partial \Omega}a(\xi, u)\partial_{\nu}u d\sigma\leq \int_{\partial \Omega}a(\xi, \underline u) \partial_{\nu}\underline u d\sigma \leq 0,
\]
and hence, because of our assumption,
\[
	\int_{\partial \Omega}a(\xi, u)\partial_{\nu}u d\sigma < 0.
\] Similarly, if $u \in U_1,$ then
\[
	\int_{\partial \Omega}a(\xi, u)\partial_{\nu}u d\sigma > 0.
\]
Let 
\[
	t_* = \sup\{t \in [0, 1]: u_t \in U^1\}.
\] The compactness of $U^1$ shows that there exists a solution $u_{t_*} \in U^1 \cap U_{t_*}$. 
Considering $u_{t_*}$ and $\overline u$ as a pair of subsolutions and supersolutions to (\ref{Dirichlet problem ct}) with $t \in (t_*, 1)$ gives us a decreasing sequence of solution $u^{(n)}$ to (\ref{Dirichlet problem ct}) with $u^{(n)}|_{\partial \Omega} \searrow c_{t_*}$ with $u^{(n)} \geq u_{t_*}$. Denote the  limit of this sequence  by $v_{t_*}$. The compactness of $U^2$ implies that $v_{t_*} \in U^2 \cap U_{t_*},$  and we now get the contradiction
\[
	0< \int_{\partial \Omega}a(x, v_{t_*})\partial_{\nu}v_{t_*}d\sigma \leq \int_{\partial \Omega}a(x, u_{t_*})\partial_{\nu}u_{t_*}d\sigma  < 0,
\] which completes the proof.
\end{proof}

\section{A generalization of the no-flux problem}

We shall next derive a result similar to Theorem \ref{Theorem no flux} for the more general boundary value problem
\begin{equation}
\label{no-flux eqn 2}
\left\{
	\begin{array}{rcll}
		-\Div [a(x, u)\nabla u ]&=& f(x, u, \nabla u), &\mbox{in } \Omega,\\
		u&=& c &\mbox{on } \partial \Omega\\
		\Phi(u) &=& 0.
	\end{array}
\right.
\end{equation}
where 
\[
\Phi: H^2(\Om)\to \mathbb R
\]
is a functional satisfying assumptions spelled out below in Assumption \ref{assume}.

\begin{R}As usual, a weak solution $u$ of (\ref{no-flux eqn 2}) belongs to  $H^1(\Omega).$ Assume that $c = u|_{\partial \Omega}$. Denoting by $b$ the map $x \mapsto a(x, u(x))$, we can apply the standard rules in differentiation to verify that $u$ satisfies the problem (in the unknown $v$)
\[
	\left\{
		\begin{array}{rcll}
			-\Delta v &=& \ds \frac{f(x, u, \nabla u) + \nabla b\nabla u}{b} &\mbox{in } \Omega,\\
			v &=& c &\mbox{on } \partial \Omega.
		\end{array}
	\right.
\]
Since any solution of the problem above is in $H^2(\Omega)$ (see \cite{GilbargTrudinger:1977}), so is $u$. This explains how to define the term $\Phi(u)$ in (\ref{no-flux eqn 2}) when the domain of $\Phi$ is $\X$. 
\label{Remark 6.1}
\end{R}
The following lemma will be useful.
\begin{Lemma}
	Assume that $\Phi$ is continuous. Let $\underline u$ and $\overline u$ be a well-ordered pair of continuous functions on $\overline \Omega.$ Let $U^{-}$ (resp. $U^{+}$) be the set of all solutions in $[\underline u, \overline u]$ to 
\begin{equation}
	\left\{
		\begin{array}{rcll}
			-\Div [a(x, u)\nabla u ]&=& f(x, u, \nabla u) &\mbox{in } \Omega,\\
			u &=& \mbox{constant} &\mbox{on } \Omega,
		\end{array}
	\right.
\label{6.2}
\end{equation} with $\Phi(u) \leq (\mbox{resp. } \geq) 0.$ If $f$ satisfies a Bernstein-Nagumo condition on $[\underline u, \overline u]$, then $U^{-}$ and $U^+$ are both compact in $H^1(\Omega).$ 
\label{Lemma U}
\end{Lemma}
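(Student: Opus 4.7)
The plan is to establish compactness of $U := U^{-} \cup U^{+}$ in $\W$ by adapting Theorems \ref{Theorem a priori bound} and \ref{Theorem compactness} to the setting of unspecified constant Dirichlet data, and then to deduce that the sign constraint $\Phi(u)\leq 0$ (resp.\ $\geq 0$) cuts a closed, hence compact, subset out of $U$. The main issue is that Theorems \ref{Theorem a priori bound} and \ref{Theorem compactness} apply to elements of $\K$, whereas elements of $U$ only have constant boundary trace, so a preliminary reduction is needed.

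For the reduction, any $u\in U$ has boundary trace equal to some constant $c = u|_{\partial\Omega}$, and since $\underline u \leq u \leq \overline u$ pointwise, $c$ lies in a bounded interval $I \subset \mathbb{R}$ determined by $\underline u,\overline u$. Set $w := u - c \in \K$; then $\nabla w = \nabla u$, $w \in [\underline u - c, \overline u - c]$, and the weak formulation of (\ref{6.2}) yields
\[
\left|\io a(x, w+c)\nabla w\nabla v\, dx\right| \leq \io (a_{2} + b_{2}|\nabla w|^{2})|v|\, dx, \qquad v\in\K.
\]
Replacing the test function $e^{tu^{2}}u$ in the proof of Theorem \ref{Theorem a priori bound} by $e^{tw^{2}}w$ (using only $a\geq 1$) yields an $\E$ bound on $w$ that is uniform in $c\in I$; combined with the boundedness of $c$, this is a uniform $\W$ bound on $u$.

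Given a sequence $\{u_{n}\}\subset U$ with decomposition $u_{n}=w_{n}+c_{n}$, I would extract a subsequence along which $c_{n}\to c_{*}$ in $\mathbb{R}$, $w_{n}\rightharpoonup w_{*}$ in $\E$, $w_{n}\to w_{*}$ in $L^{2}(\Omega)$ and almost everywhere. Testing (\ref{6.2}) with $e^{t(w_{n}-w_{*})^{2}}(w_{n}-w_{*})\in\K$ (valid since $w_{n}-w_{*}$ has zero trace) and running the argument of Theorem \ref{Theorem compactness} verbatim (the coefficient $a(x,u_{n})$ plays no role beyond $a\geq 1$) forces strong convergence $w_{n}\to w_{*}$ in $\E$, hence $u_{n}\to u_{*}:=w_{*}+c_{*}$ strongly in $\W$. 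Passage to the limit in the weak form of the equation, using almost everywhere convergence of $\nabla u_{n}$, dominated convergence for $a(x,u_{n})$, and the Bernstein-Nagumo bound on $f$, shows that $u_{*}$ solves (\ref{6.2}) with constant boundary value $c_{*}$, so $u_{*}\in U$ and $U$ is compact in $\W$.

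To conclude, by Remark \ref{Remark 6.1} every $u\in U$ lies in $\X$, and uniform $\X$ bounds together with the strong $\W$ convergence above allow standard elliptic regularity to upgrade the convergence in whatever topology the continuity of $\Phi$ requires. Continuity of $\Phi$ then yields $\Phi(u_{n})\to\Phi(u_{*})$, the sign condition is preserved, and $U^{-}$, $U^{+}$ are closed subsets of the compact set $U$, hence compact. The main technical point, and essentially the only analytically nontrivial step, is the strong $\E$ convergence of $\{w_{n}\}$; the varying constant trace $c_{n}$ is handled cleanly by the shift $w = u - c$, and the final bookkeeping with $\Phi$ is standard once the topology matches.
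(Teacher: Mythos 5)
Your reduction $w=u-c$ and the resulting $\W$ compactness of $U$ is fine, and in fact it handles more explicitly than the paper does the point that solutions of (\ref{6.2}) have a nonzero constant trace and so do not literally fall under Theorems \ref{Theorem a priori bound} and \ref{Theorem compactness}. The gap is in the last step, which you dismiss as ``standard bookkeeping'' but which is actually the crux of the lemma. The functional $\Phi$ is only assumed continuous on $\X$ with its norm topology, so to conclude that $\Phi(u_n)\leq 0$ passes to the limit you need \emph{strong} convergence of $u_n$ to $u_*$ in $\X$. Your argument offers ``uniform $\X$ bounds together with the strong $\W$ convergence'' and an appeal to elliptic regularity; but uniform $\X$ bounds plus strong $\W$ convergence yield only \emph{weak} $\X$ convergence (along a subsequence), and a merely norm-continuous $\Phi$ need not be weakly sequentially continuous, so the sign condition need not survive the limit. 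Moreover, you never actually establish the uniform $\X$ bounds you invoke.

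The paper closes exactly this gap with a specific argument: each $u_n$ is rewritten as the solution of a Poisson problem $-\Delta u_n = g_n$ with $g_n = \bigl(f(x,u_n,\nabla u_n)+\nabla b_n\nabla u_n\bigr)/b_n$, $b_n(x)=a(x,u_n(x))$; the difference $v_{n,m}=u_n-u_m$ then solves $-\Delta v_{n,m}=g_{n,m}$ with constant boundary data, and one shows $\|g_{n,m}\|_{L^2(\Omega)}\to 0$ using the continuity of $a$, the Bernstein--Nagumo bound, and the already-established Cauchy property of $\{u_n\}$ in $\W$. The global $H^2$ elliptic estimate then gives $\|v_{n,m}\|_{\X}\to 0$, i.e.\ $\{u_n\}$ is Cauchy in $\X$, its $\X$-limit coincides with $u_*$, and only then does continuity of $\Phi$ yield $\Phi(u_*)\leq 0$. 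You need to supply this (or an equivalent) quantitative step; as written, your proof establishes compactness of $U$ in $\W$ but not closedness of $U^{-}$ and $U^{+}$ under that convergence.
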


\begin{proof}
	Let $\{u_n\}$ be a sequence in $U^-$. Theorem \ref{Theorem a priori bound} and \ref{Theorem compactness} help us  find a solution $u$ of (\ref{6.2}) with $ u_n \rightarrow u$ in $H^1(\Omega)$. Repeating the arguments in Remark \ref{Remark 6.1}, we see that $u \in \X$ and therefore $\Phi(u)$ is well-defined. Hence, proving $\Phi(u) \leq 0$ is sufficient to the compactness of $U^-$. 

For all $n \geq 1$, it is not hard to see that $u_n$ solves
\[
	\left\{
		\begin{array}{rcll}
			-\Delta u_n &=& \ds \frac{f(x, u_n, \nabla u_n) + \nabla b_n\nabla u_n}{b_n} &\mbox{in } \Omega,\\
			u_n &=& c_n &\mbox{on } \partial \Omega.
		\end{array}
	\right.
\] where $c_n$ is a constant and $b_n(x) = a(x, u_n(x)).$ Letting $v_{n, m} = u_n - u_m$, we have
\[
	\left\{
		\begin{array}{rcll}
			-\Delta v_{n, m} &=&  g_{n, m} &\mbox{in } \Omega,\\
			u_{n, m} &=& c_n - c_m &\mbox{on } \partial \Omega.
		\end{array}
	\right.
\] where
\[
	g_{n, m}(x) = \frac{f(x, u_n, \nabla u_n) + \nabla b_n\nabla u_n}{b_n} -  \frac{f(x, u_m, \nabla u_m) + \nabla b_m\nabla u_m}{b_m}.
\] It follows from the continuity of $a$, the Bernstein-Nagumo requirement on $f$ and the Cauchy property of $\{u_n\}$ in $H^1(\Omega)$ that $\|g_{m, n}\|_{L^2(\Omega)} \rightarrow 0$. Applying the $H^2$ regularity results in \cite{GilbargTrudinger:1977}, we have $\|v_{m, n}\|_{\X} \rightarrow 0$, which shows $\{u_n\}$ is Cauchy in $\X$. Its limit must be $u$. By the continuity of $\Phi$, $\Phi(u) \leq 0.$ 

The compactness of $U^+$ can be proved in the same manner. 
\end{proof}

We again need  the notion of sub- and supersolution for (\ref{no-flux eqn 2}).
\begin{Definition}\label{defr2}
 A function $u \in V$ is called a subsolution (resp.\ supersolution) of (\ref{no-flux eqn 2}) if, and only if:
\begin{enumerate}
	\item[i.] for all nonnegative functions $v \in \K,$
		\[
			\io a(x, u)\nabla u\nabla v dx \leq (\geq) \io f(x, u, \nabla u) v dx,
		\]
	\item[ii.] $\Phi(u) \leq (\ge)~ 0.$
\end{enumerate}
% \begin{equation}
% \label{30}
% \begin{array}{rcl}
% -\Div a(x, u)\nabla u &\le \;\; (\mbox{resp.} \ge )&f(x, u, \nabla u), ~x\in \Omega \\
% \Phi(u) &\le \;\; (\mbox{resp.} \ge )&0.
%\end{array} 
%\end{equation}
\end{Definition}

In the definition above, we employ again the functional space $V$ in the previous section.

We shall impose the following assumption on the functional $\Phi .$
\begin{Assumption}
The functional $\Phi$ is continuous and satisfies: If $u,v\in H^2(\Omega)$ are such that $u\leq v ~in ~\Om$ and
$u\equiv v~ on~ \partial \Om ,$ then $\Phi(u)\geq \Phi(v).$
\label{assume}
\end{Assumption}

We next establish a  theorem similar to the result about the no-flux problem (assuming conditions as before on  $f$)
\begin{Theorem}
\label{Theorem no-flux theorem 1}
Assume there exist functions $\underline u , \overline u \in V \cap C^1(\overline \Omega)$ which are, respectively, sub- and supersolutions of (\ref{no-flux eqn 2})and satisfy
\[
\underline u(x)  \le \overline u (x),~x\in \Om.
\]
Let the functional $\Phi$ satisfy Assumption \ref{assume} and assume that $|f|$ satisfies a Bernstein-Nagumo condition on $[\underline u, \overline u].$ Then there exists a solution $u$ of (\ref{no-flux eqn 2}) such that
\[
\underline u (x) \leq u(x) \le \overline u (x),~x\in \Om.
\]
\end{Theorem}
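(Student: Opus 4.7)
The plan is to transfer the argument of Theorem \ref{Theorem no flux} over to this more general setting, with the boundary integral $\int_{\partial \Omega} a(\xi, u)\partial_\nu u \, d\sigma$ replaced everywhere by $\Phi(u)$. The monotonicity in Assumption \ref{assume} plays exactly the role that the comparison of boundary integrals (for ordered pairs agreeing on $\partial\Omega$) played in the proof of Theorem \ref{Theorem no flux}, and Lemma \ref{Lemma U} takes the place of Theorem \ref{Theorem compactness} as the compactness tool.

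First, I would set $\alpha = \underline u|_{\partial\Omega}$ and $\beta = \overline u|_{\partial\Omega}$ (both constants, since $\underline u,\overline u\in V$), note that $\alpha\le\beta$, and for each $t\in[0,1]$ set $c_t = t\beta + (1-t)\alpha$. For every such $t$, Theorem \ref{Theorem Dirichlet c} applied to the pair $\underline u,\overline u$ (which are still a sub- and supersolution for the Dirichlet problem with boundary value $c_t$, since $\underline u|_{\partial\Omega}=\alpha\le c_t\le\beta=\overline u|_{\partial\Omega}$ and the PDE inequalities are unchanged) produces a solution $u_t\in C^1(\overline\Omega)\cap[\underline u,\overline u]$ of (\ref{Dirichlet problem ct}). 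By Remark \ref{Remark 6.1} each such $u_t$ lies in $\X$, so $\Phi(u_t)$ is well-defined. Writing $U_t$ for the set of such solutions and $U=\bigcup_{t\in[0,1]}U_t$, assume for contradiction that (\ref{no-flux eqn 2}) has no solution in $[\underline u,\overline u]$, so $\Phi$ does not vanish on $U$ and $U = U^-\cup U^+$, where $U^\pm = \{u\in U:\pm\Phi(u)>0\}$. By Lemma \ref{Lemma U}, both $U^-$ and $U^+$ are compact in $H^1(\Omega)$.

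The endpoint cases are handled by Assumption \ref{assume}. If $u\in U_0$, then $u|_{\partial\Omega}=\alpha=\underline u|_{\partial\Omega}$ and $u\ge\underline u$ in $\Omega$; the monotonicity of $\Phi$ then gives $\Phi(u)\le\Phi(\underline u)\le 0$, and since $\Phi(u)\ne 0$ we conclude $u\in U^-$. Symmetrically, $U_1\subset U^+$. Now define
\[
t_* = \sup\{t\in[0,1]: U_t\cap U^-\ne\emptyset\},
\]
so $t_*\in(0,1]$, and pick a sequence $t_n\nearrow t_*$ with $u_{t_n}\in U_{t_n}\cap U^-$; by compactness of $U^-$ a subsequence converges in $H^1$ to some $u_{t_*}\in U^-\cap U_{t_*}$, which in particular has $u_{t_*}|_{\partial\Omega}=c_{t_*}$ and $\Phi(u_{t_*})<0$.

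The final step, which I expect to be the main obstacle, is to produce a second element $v_{t_*}\in U_{t_*}$ that lies in $U^+$ and satisfies $v_{t_*}\ge u_{t_*}$, so as to violate Assumption \ref{assume}. For this I would, for each $s\in(t_*,1)$, treat $u_{t_*}$ and $\overline u$ as a sub-/supersolution pair for the Dirichlet problem (\ref{Dirichlet problem ct}) with boundary value $c_s$: $u_{t_*}|_{\partial\Omega}=c_{t_*}<c_s$ and $\overline u|_{\partial\Omega}=\beta\ge c_s$, and the PDE inequalities are inherited from the fact that $u_{t_*}$ solves the equation and $\overline u$ is a supersolution. Theorem \ref{Theorem Dirichlet c} yields a solution $u^{(s)}$ with $u_{t_*}\le u^{(s)}\le\overline u$, and by the definition of $t_*$ each such $u^{(s)}$ lies in $U^+$. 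Sending $s_n\searrow t_*$ and invoking compactness of $U^+$ gives a limit $v_{t_*}\in U^+\cap U_{t_*}$ satisfying $v_{t_*}\ge u_{t_*}$ and $v_{t_*}\equiv u_{t_*}=c_{t_*}$ on $\partial\Omega$. Assumption \ref{assume} then forces $\Phi(v_{t_*})\le\Phi(u_{t_*})<0$, contradicting $v_{t_*}\in U^+$. Care is needed to ensure that the monotonicity inequality in Assumption \ref{assume} is applied to functions in $H^2(\Omega)$, but this is guaranteed by Remark \ref{Remark 6.1}.
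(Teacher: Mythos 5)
Your proof is correct and follows essentially the same route as the paper's: the one-parameter family of Dirichlet problems solved via Theorem \ref{Theorem Dirichlet c}, the splitting $U=U^-\cup U^+$ with compactness supplied by Lemma \ref{Lemma U}, the endpoint placement of $U_0$ and $U_1$ via Assumption \ref{assume}, and the supremum/monotonicity contradiction at $t_*$. Your explicit construction of the second solution $v_{t_*}\in U^+\cap U_{t_*}$ with $v_{t_*}\ge u_{t_*}$ (using $u_{t_*},\overline u$ as a sub-/supersolution pair for $c_s$ and letting $s\searrow t_*$) merely spells out the step the paper states tersely, importing it from the proof of Theorem \ref{Theorem no flux}.
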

\begin{proof} Let
\[
u_{\lambda}(x):=(1 - \lambda) \underline u (x)+ \lambda \overline u(x), ~x\in \Om
\]
 and for any $\lambda \in [0,1]$ consider the Dirichlet boundary value problem
\begin{equation}
\label{bvpc2}
	\left\{
		\begin{array}{rcll}
			-\Div [a(x, u)\nabla u ]&=& f(x, u, \nabla u), &\mbox{in} ~\Omega,\\
			u &=& u_{\lambda}, &\mbox{on } \partial \Omega .
		\end{array}
	\right.
\end{equation}
Since $\underline u$ and $\overline u$ are, respectively sub- and supersolutions of (\ref{no-flux eqn 2}), then for any such $\lambda $ they are, respectively, sub- and supersolutions of (\ref{bvpc2}), as follows from the definitions. 
We may therefore conclude from Theorem \ref{Theorem Dirichlet c} that each problem (\ref{bvpc2}) has a solution $u\in V$ with 
\[
\underline u(x) \leq u(x) \le \overline u (x),~x\in \Om.
\]
Let us denote, for each such $\lambda,$ by $U_{\lambda} $ the set of all such solutions. It follows from Theorem \ref{Theorem compactness} that
\[
U:=\cup _{0\leq \lambda \leq 1}U_{\lambda }
\]
is a compact family in $H^1(\Omega).$
By Remark \ref{Remark 6.1}, we have that $U\subset H^2(\Om ).$ 
%If now $u\in U_1,$ then it must be the case that 
%\[
%\na u\cdot \nu \leq \na \al \cdot \nu ,~ x\in \pa \Om
%\]
%and hence that
%\[
%\na u\cdot \nu +\mu (x) u(x) \leq \na \al \cdot \nu +\mu (x) \al (x) \leq 0 ,~ x\in \pa \Om
%\]
%and similarly if 
% $u\in U_0,$ then it must be the case that 
%\[
%\na u\cdot \nu \mu (x) u(x) \geq \na \be \cdot \nu +\mu (x) \be (x) \geq 0,~ x\in \pa \Om .
%\]
%If it is the case that $\al (x)=\be (x) ,~x\in \Omega ,$ then the above inequalities show that b$U_0=U_1$ and any solution $u\in U$ already is a solution of (\ref{robin2}) and the proof is complete. 
%We hence now assume that $\al (x) <\be (x) $ on a set of positive measure in $\pa \Omega $ and 
We claim that there exists $\lambda \in [0,1]$ and a solution $u\in U_{\lambda}$ of (\ref{bvpc2}) such that
\[
\Phi(u)=0
\]
 and hence that $u$ is a solution of (\ref{no-flux eqn 2}). This we argue indirectly. As in Lemma \ref{Lemma U}, let 
\[ U^-=\{ u\in U: 0<\Phi(u) \}
\] and 
\[ U^+=\{ u\in U: \Phi(u)>  0\},
\] 
These two sets are nonemppty because Assumption \ref{assume} implies $u_0 \in U^-$ and $u_1 \in U^+.$ 
%Then, by what has been said before, we conclude that both $U^-$ and $U^+$ are nonempty compact sets in $\W .$ 
Further, if we let
\[
\bar {\lambda} =\sup \{\lambda \in[0,1]:u_{\lambda}\in U^-\},
\]
then, using the compactness of the families $U^-$ and $U^+, $  Theorem \ref{Theorem Dirichlet c} and our assumption, we conclude that for this value $\bar {\lambda}$ there must exist solutions $u,v\in U_{\bar \lambda }$ with $u\in U^-$ and $v\in  U^+$ such that
\[
u(x)\leq v(x), ~x\in \Om .
\]
This, however, will imply the impossible statement
\[
0<\Phi(u) \leq \Phi(v) < 0 .
\]
The contradiction, arrived at, concludes the proof.
\end{proof}

\begin{R}
	Let $\underline u,$ $\overline u$ and $f$ be as in Theorem \ref{Theorem no-flux theorem 1} with the condition that both $\underline u$ and $\overline u$ take constant values on $\partial \Omega$ can be generalized to the case that $\underline u|_{\partial \Omega}$ and $\overline u|_{\partial \Omega}$ are the traces of two $\X$ functions on $\partial \Omega$. By the same arguments above, we can find a solution in $\X$ to
	\[
		\left\{
			\begin{array}{rcll}
				-\Div[a(x, u)\nabla u] &=& f(x, u, \nabla u) &\mbox{in } \Omega,\\
				\Phi(u) &=& 0
			\end{array}
		\right.
	\] with $u|_{\partial \Omega}$ being a convex combination of $\underline u|_{\partial \Omega}$ and $\overline u|_{\partial \Omega}.$
\end{R}

%\bibliographystyle{siam}
%\bibliography{mybib}

\end{document}